\documentclass[final,reqno]{amsart}

% --- PACKAGES ---
\usepackage{graphicx}
\usepackage{amsmath,amsthm,amsfonts,amscd,amssymb}
\usepackage[color,notref,notcite]{showkeys}
\usepackage{url}
\usepackage{subeqnarray}
\usepackage[color,notref,notcite]{showkeys} % for displaying labels in the margins
\definecolor{labelkey}{rgb}{0.6,0,1}
\usepackage{enumitem}%citesort
\usepackage{algorithm}
\usepackage{algpseudocode}
\usepackage{citesort}

% --- MACROS ---

% theorems
\theoremstyle{plain}
\newtheorem{theorem}{Theorem}[section]

% lemmas
\newtheorem{lemma}[theorem]{Lemma}

% propositions

% hypothesis

% corollaries

% definitions
\theoremstyle{definition}
\newtheorem{definition}[theorem]{Definition}

% for listing hypotheses
\def\bhyp#1{\begin{equation}\label{#1}\begin{array}{c}}
\def\ehyp{\end{array}\end{equation}}

% for counting constants (code taken from Droniou/Chainais-Hillairet)
\newcounter{cst}

% examples and notes
\theoremstyle{remark}

\numberwithin{equation}{section}
\numberwithin{figure}{section}

% math blackboard
\newcommand{\RR}{{\mathbb R}}
\newcommand{\NN}{{\mathbb N}}

% Yahya
\def\O{\Omega}
\def\dsp{\displaystyle}

\def\disc{{\mathcal D}}

 % ensemble des aretes ext
 % ensemble des aretes int

\def\dr{\partial}
 %centre de gravite de l'interface

\newcommand{\x}{\pmb{x}}
\newcommand{\cA}{{\mathcal A}}
\newcommand{\cB}{{\mathcal B}}

\def\bvarphi{\boldsymbol{\phi}}

\def\half{\frac{1}{2}}
\def\g{\mathbf{g}}
\def\H{\mathbf{H}}
\def\w{\mathbf{w}}
\def\L{\mathbf{L}}
\def\X{\mathbf{X}}
\def\Y{\mathbb{Y}}

%%%%%%%%%%%%% for notes

\newif\ifcorr\corrtrue
%\corrfalse
\usepackage[normalem]{ulem}
\normalem
\definecolor{violet}{rgb}{0.580,0.,0.827}

%%%%%%%%%%%%%

\def\bpsi{{\boldsymbol \psi}}
\def\u{{\boldsymbol u}}

\def\v{{\boldsymbol v}}

% greek
% miscellaneous
\newcommand{\ud}{\, \mathrm{d}} % for getting the roman script d in integrals
\def\div{\mathop{\rm div}}

% --- TITLE INFO ---

\title[Analysis of schemes for the Navier-Stokes Equation]{A generic Scheme For the time-dependent Navier-Stokes Equation Coupled With The Heat Equation}

% author information if not using amsaddr package
\author{Yahya Alnashri}
\address[Yahya Alnashri]{Department of Mathematics, Al-Qunfudah University College, Umm Al-Qura University, Saudi Arabia}
\email{yanashri@uqu.edu.sa}
%\subjclass[2010]{35J87, 65N12, 76S05}

\keywords{Navier-Stokes problem, heat equation, time-dependent problem, gradient discretisation method, gradient schemes, finite volume scheme, convergence analysis}

\date{\today}

% --- BEGINNING OF DOCUMENT ---
\begin{document}
\newcommand{\subscript}[2]{$#1 _ #2$}
% --- ABSTRACT ---

\begin{abstract}
In this work, we study the gradient discretisation method (GDM) of the time-dependent Navier-Stokes equations coupled with the heat equation, where the viscosity depends on the temperature. We design the discrete method and prove its convergence without non-physical conditions. The paper is closed with numerical experiments that confirm the theoretical results.
\end{abstract}

\maketitle
%%%%%%%%%%%%%%%%%%%%%%%%%%%%%%%%%%%%%%%%%%%%%%%%%%%%%%%%%%%%%%%%%%%%%%

% --- INTRODUCTION ---

\section{Introduction}
Navier-Stokes problems are of significant importance for a variety of applications. Examples are cooling processes, industrial furnaces, boilers, and heat exchangers \cite{B2}. In this work, we extend the gradient discretisation method (GDM) to the transient coupled Navier-Stokes and heat equations describing the flow of a viscous incompressible fluid. The GDM introduced in \cite{B1} is an abstract setting that covers different families of numerical schemes, and it has been applied successfully for various problems in porous media. The model we consider here is governed by the unknowns $\bar \u$, $\bar p$, and $\bar S$ satisfying the following equations:
\begin{subequations}\label{nsp}
\begin{align}
\dr_t \bar\u-\div(V(\bar S)\nabla\bar \u)+(\bar \u\cdot \nabla\bar \u)+\nabla\bar p=\g &\mbox{\quad in $\O \times (0,T)$,} \label{nsp-1}\\
\div \bar \u=0 &\mbox{\quad in $\O \times (0,T)$,} \label{nsp-2}\\
\dr_t \bar S-\mu\Delta\bar S + (\bar \u\cdot\nabla)\bar S=h &\mbox{\quad in $\O \times (0,T)$,} \label{nsp-3}\\
\bar \u=0 &\mbox{\quad on $\dr\O \times (0,T)$,} \label{nsp-4}\\
\bar S=0&\mbox{\quad on $\dr\O \times (0,T)$}\label{nsp-5},\\
\bar\u(\x,0)=\u_0 &\mbox{\quad in $\O$},\label{nsp-5}\\
\bar S(\x,0)=S_0 &\mbox{\quad in $\O$}.\label{nsp-6}
\end{align}
\end{subequations}
%--------------------------------------------------

Here, the vector $\bar \u$ and the variables $\bar p$ and $\bar S$ denote the velocity, the pressure and the temperature of the fluid, respectively, and they are defined on the polygonal domain $\O\subset \RR^d$ $(d=2,3)$. The functions $\g$ and $h$ denote the distributions, and $\u_0$ and $S_0$ denote the initial data and temperature inside the body $\O$. Note that the model involves non-linear viscosity ($V$) depending on the temperature. We focus here on homogeneous Dirichlet boundary conditions for simplicity, whereas our analysis can easily be extended to different conditions.
%------------------------------------------------------------------

It is proved in \cite{REF1} that the model \eqref{nsp} has a weak solution under standard assumptions on data. Because of the non-linearity term, it is not directly used to compute analytical solutions. Therefore, there is a high demand for numerical approximation to find a solution in practice. A large amount of articles are available on discretisation of Navier-Stokes equations. First, the problems without the heat equation have been studied extensively in different works; see, for example, \cite{EYMARD-1,EYMARD2,A3,A4}. 
%----------------------------------------

Recently, several studies have started analysing stationary coupled Navier-Stokes and heat equations. Without exhaustivity, we cite that problems are discretised by a spectral method in \cite{A8} and a finite element method in \cite{A9}. Using the GDM, \cite{Y-NSP-2021} provides a complete convergence analysis. In \cite{REF4}, virtual element discretisation is employed to obtain an order of convergence.
%---------------------------------

Although several works on the numerical approximation of unsteady Navier-Stokes coupled with heat equations have been carried out in \cite{REF1,A10,A11}- to our knowledge- there is no available literature on the numerical analysis of non-conforming discretisation of the model \eqref{nsp}. This paper aims to utilise the gradient discretisation method to propose a generic scheme and a new convergence analysis approach for the model \eqref{nsp}.
%------------------------------------

The rest of the paper is organised as follows: In Section \ref{sec-2}, we state our assumptions and define the continuous weak formulation of our model. Also, we introduce the fully discrete scheme and some properties. In Section \ref{sec-3}, we prove the proposed method's convergence in space and time variables under the physical hypothesis on data. Section \ref{sec-numerical} is devoted to numerical experiments to justify the theoretical results.

%--------------------------------------------------
%--------------------------------------------------

%SECTION
\section{Continuous and Discrete Problems}\label{sec-2}
Let $\O$ be a connected open subset of $\RR^d\; (d=1,2,3)$ with a boundary $\dr\O$. We use the notation $\L^2(\O)^d$ for the space of square-integrable vector functions and $\H^1(\O)^d$ for the Sobolev space of vector functions. For $T>0$, we denote by $L^2(0,T;\H^1(\O)^d)$, the space of vector functions $\v$ such that for almost all $t\in[0,T]$, $\v(\cdot,t) \in \H^1(\O)$, and it is endowed with norm defined by
\[
\|\v\|_{\L^2(0,T;\H^1(\O)^d)}=\Big(\dsp\int_0^T\|\v\|_{\H^1(\O)^d} \ud t\Big)^{1/2}.
\]
For scalar functions, we use the notations $L^2(\O)$ and $L^2(0,T;H^1(\O))$. From now on, let$''\cdot''$ be the dot product on $\RR^d$, if $\bvarphi=(\varphi_{ij})_{i,j=1,...,d}$ and $\bpsi=(\psi_{ij})_{i,j=1,...,d} \in \RR^{d \times d}$, $\bvarphi:\bpsi=\sum_{i,j=1}^d \varphi_{ij}\psi_{ij}$ is the doubly contracted product on $\RR^{d \times d}$, and
\begin{equation}\label{eq-A}
\cA(\u,\v,\w)=\dsp\sum_{i,j=1}^d\dsp\int_\O u_i(\dr_i v_j)w_j \ud \x, \quad \forall (\u,\v,\w) \in \H_0^1(\O)^d \times \H_0^1(\O)^d \times \H_0^1(\O)^d,
\end{equation}
\begin{equation}\label{eq-B}
\cB(\u,r,S)=\dsp\sum_{i=1}\dsp\int_\O u_i(\dr_i r)S \ud \x, \quad \forall (\u,r,S) \in \H_0^1(\O)^d \times H_0^1(\O) \times H_0^1(\O).
\end{equation}  
%------------------------------------------------------

We assume the model data satisfies the following conditions:
\begin{equation}\label{assump-nsp}
\left.
\begin{aligned}
&\mu \mbox{ is a positive constant,}\\
&\mbox{$V \in L^\infty(\RR)$, and it holds, for two positive constants $a_1$ and $a_2$},\\
&\mbox{for $\xi \in \RR$, $a_1 \leq V(\xi) \leq a_2$},\\
&\g \in L^2(0,T;\L^2(\O)^d) \mbox{ and } h \in L^2(0,T;L^2(\O)),\\
&\u_0 \in \H_0^1(\O)^d \mbox{ and } S_0 \in H_0^1(\O).
\end{aligned}
\right.
\end{equation}
%-------------------------------------------------

Multiplying the problem \eqref{nsp} with trail functions $\v$, $q$, and $r$ and integrating by parts, the problem can be rewritten in a weak sense as the variational formulation. Under the above assumptions, the weak solution to \eqref{nsp} is to find $(\bar\u,\bar p,\bar S) \in L^2(0,T;\H_0^1(\O)^d) \times L^2(0,T;L_0^2(\O)) \times L^2(0,T;H_0^1(\O))$, such that,
\begin{subequations}\label{nsp-weak}
\begin{equation}\label{nsp-weak-1}
\begin{aligned}
&\mbox{for all $\v \in L^2(0,T;\H_0^1(\O)^d)$ such that $\dr_t \v \in L^2(\O\times (0,T))$ and $\v(\cdot,T)=0$}\\
&-\dsp\int_0^T\dsp\int_\O \bar\u(\x,t)\dr_t \v \ud \x \ud t
-\dsp\int_\O \u_0 \v(\x,0) \ud \x\\
&+\dsp\int_0^T\dsp\int_\O V(\bar S)(\x,t)\nabla\bar \u(\x,t) : \nabla\v(\x) \ud \x \ud t\\
&+\dsp\int_0^T\cA(\bar\u(\x,t),\bar\u(\x,t),\v(\x)) \ud t
-\dsp\int_\O(\div \v)(\x) \bar p(\x,t) \ud \x\\
&=\dsp\int_0^T\dsp\int_\O \g(\x,t) \v(\x) \ud \x \ud t,
\end{aligned}
\end{equation}
\begin{equation}\label{nsp-weak-2}
\mbox{for all } q \in L^2(0,T;L_0^2(\O)), \quad
-\dsp\int_0^T\dsp\int_\O \div \bar\u(\x,t) q(\x) \ud \x \ud t =0,
\end{equation}
\begin{equation}\label{nsp-weak-3}
\begin{aligned}
&\mbox{for all $r \in L^2(0,T;H_0^1(\O))$ such that $\dr_t r \in L^2(\O\times (0,T))$ and $r(\cdot,T)=0$}\\
&\quad-\dsp\int_0^T\dsp\int_\O \bar S(\x,t)\dr_t r(\x) \ud \x \ud t
-\dsp\int_\O S_0 r(\x,0) \ud \x\\
&\quad+\mu \dsp\int_0^T\dsp\int_\O \nabla\bar S(\x,t)\cdot \nabla r(\x) \ud \x \ud t
+\dsp\int_0^T\cB(\bar\u(\x,t),\bar S(\x,t),r(\x)) \ud t\\
&=\dsp\int_0^T\dsp\int_\O h(\x,t)r(\x) \ud \x \ud t, 
\end{aligned}
\end{equation}
\end{subequations}
where $L_0^2(\O):=\{ q\in L^2(\O)\;:\; \dsp\int_\O q(\x) \ud \x=0 \}$, and $\cA$ and $\cB$ are respectively defined by \eqref{eq-A} and \eqref{eq-B}.
%--------------------------------------

Under the above standard assumptions on data, \cite{REF1} shows that there exists at least one solution to the problem \eqref{nsp-weak}. Now, let us construct the gradient discretisation method for our model. For the sake of completeness, we reintroduce the discrete elements (defined in \cite[Definition 2.2]{Y-NSP-2021} ) to discretise the spatial space $\O$.
%----------------------------------------------

\begin{definition}\label{def-gd-nsp}
A gradient discretisation $\disc$ for the Navier--Stokes model coupled with the heat equation  is given by $\disc=(\X_{\disc,0},Y_\disc,  \Y_{\disc,0}, \Pi_\disc, \widetilde\Pi_\disc, \nabla_\disc, \widetilde\nabla_\disc, \div_\disc, \chi_\disc, \cA_\disc, \cB_\disc)$, where:
\begin{itemize}
\item $\X_{\disc,0}$, $Y_\disc$ and $\Y_{\disc,0}$ are finite-dimensional vector spaces over $\RR^d$,
\item $\Pi_\disc : \X_{\disc,0} \to \L^2(\O)$ and $\widetilde\Pi_\disc : \Y_{\disc,0} \to L^2(\O)$ are the function reconstruction operators, which are linear,
\item $\nabla_\disc : \X_{\disc,0} \to \L^2(\O)^d$ and $\widetilde\nabla_\disc : \Y_{\disc,0} \to L^2(\O)^d$ are the gradient reconstruction operators, which are linear and must be defined so that $\| \nabla_\disc\cdot \|_{\L^2(\O)^d}$ and $\| \widetilde\nabla_\disc\cdot \|_{L^2(\O)^d}$ are norms on $\X_{\disc,0}$ and $\Y_{\disc,0}$, respectively,
\item $\chi_\disc:Y_\disc \to L^2(\O)$ is the reconstruction of the approximate pressure, and must be defined so that $\| \chi_\cdot \|_{L^2(\O)}$ is a norm on $Y_\disc$. Define the quantity $B_\disc >0$ by
\begin{equation}\label{new-constant}
B_\disc=\min_{q\in Y_{\disc,0}-\{0\}}\max_{v\in \X_{\disc,0}-\{0\}}\dsp\frac{\dsp\int_\O \chi_\disc q {\div}_{\disc} v \ud \x}{ \| \nabla_\disc v \|_{\L^2(\O)^d} \| \chi_\disc q \|_{L^2(\O)} },
\end{equation} 
where $Y_{\disc,0}:=\{ q\in Y_\disc\; : \; \dsp\int_\O \chi_\disc q \ud \x =0   \}$.
\item $\div_\disc:\X_{\disc,0}\to \L^2(\O)$ is the discrete divergence operator,

\item $\cA_\disc:\X_{\disc,0}^2 \to \RR$ and $\cB_\disc:\X_{\disc,0}\times \Y_{\disc,0}\times \Y_{\disc,0} \to \RR$ are the discrete convection terms, and they must be defined such that
\begin{itemize}
\item $\cA_\disc$ and $\cB_\disc$ are continuous,
\item $\forall (\u,\u)\in \X_{\disc,0}^2$, $\cA_\disc(\u,\u)\geq 0$, and $\forall (\u,S,r)\in \X_{\disc,0}\times \Y_{\disc,0}\times \Y_{\disc,0}$, $\cB_\disc(\u,S,S)\geq 0$,
\item $\exists\; \alpha_\disc,\beta_\disc >0$ such that 
\[
|\cA_\disc(\u,\v)|\leq \alpha_\disc\| \nabla_\disc \u \|_{\L^2(\O)^d} \| \nabla_\disc \v \|_{\L^2(\O)^d},\mbox{ and }
\]
\[
|\cB_\disc(\u,S,r)|\leq \beta_\disc\| \nabla_\disc \u \|_{\L^2(\O)^d} \| \widetilde\nabla_\disc S \|_{L^2(\O)^d} \| \widetilde\nabla_\disc r \|_{L^2(\O)^d},
\]
\item $\cA_\disc(\u,\v)$ and $\cB_\disc(\u,S,r)$ are linear respect to $\v$ and $r$ respectively.
\end{itemize} 
\end{itemize}
\end{definition}
%---------------------------
 
\begin{definition}\label{def-gd-nsp-T}
A space-time gradient discretisation $\disc_T$ for the time-dependent Navier-Stokes problem with the heat equation is given by $\disc_T=(\disc,J_\disc, \widetilde J_\disc,(t^{(n)})_{n=0,...,N})$, where:
\begin{itemize}
\item $\disc:=(\X_{\disc,0},Y_\disc,  \Y_{\disc,0}, \Pi_\disc, \widetilde\Pi_\disc, \chi_\disc \nabla_\disc, \widetilde\nabla_\disc, \div_\disc,\cA_\disc, \cB_\disc)$ is the gradient discretisation of the spatial space $\O$ defined in Defintion \ref{def-gd-nsp}.  
%-------------------------------------
\item $J_\disc:\L^2(\O)^d\to \X_{\disc,0}$ and $\widetilde J_\disc:L^2(\O)\to \Y_{\disc,0}$ are interpolant operators.
%-------------------------------------
\item $t^{(0)} =0<t^{(1)} <...<t^{(N)} =T$.
\end{itemize}
\end{definition}
%---------------------------

Now, if $\disc_T$ is a space-time gradient discretisation defined in Definition \ref{def-gd-nsp-T}. Then the corresponding gradient scheme for \eqref{nsp-weak} is given by:
\begin{subequations}\label{nsp-gs}
\begin{equation*}
\begin{aligned}
&\u=(\u^{(n)})_{n=0,...,N}\in \X_{\disc,0}^{N+1}, p=(p^{(n)})_{n=0,...,N}\in Y_\disc^{N+1}, S=(S^{(n)})_{n=0,...,N}\in \Y_{\disc,0}^{N+1} ,\\
&\mbox{such that $\u^{(0)}=J_\disc \u_0$ and $\widetilde J_\disc S^{(0)}=S_0$ and for all $n=0,..,N-1$,}
\end{aligned}
\end{equation*}
\begin{equation}\label{nsp-gs-1}
\begin{aligned}
&\dsp\int_\O \delta_\disc^{(n+\frac{1}{2})}\u \cdot \Pi_\disc \v \ud \x
+\dsp\int_\O V(\widetilde\Pi_\disc S^{(n+1)})\nabla_\disc \u^{(n+1)}: \nabla_\disc \v \ud \x\\
&+\cA_\disc(\u^{(n+1)},\v)
-\dsp\int_\O({\div}_\disc \v) \chi_\disc p^{(n+1)} \ud \x\\
&=\dsp\int_\O \g \Pi_\disc \v \ud \x,
\quad \mbox{for all $\v\in \X_{\disc,0}$},
\end{aligned}
\end{equation}
\begin{equation}\label{nsp-gs-2}
\dsp\int_\O {\div}_\disc \u^{(n+1)} \chi_\disc q \ud x =0, \quad \mbox{for all $q \in Y_\disc$},
\end{equation}
\begin{equation}\label{nsp-gs-3}
\begin{aligned}
&\dsp\int_\O \widetilde\delta_\disc^{(n+\frac{1}{2})}S \cdot \widetilde\Pi_\disc r \ud \x
+\mu \dsp\int_\O \widetilde\nabla_\disc S^{(n+1)}\cdot \widetilde\nabla_\disc r \ud \x
+\cB_\disc(\u^{(n+1)},S^{(n+1)},r)\\
&=\dsp\int_\O h\widetilde\Pi_\disc r \ud \x,
\quad \mbox{for all $r \in \Y_{\disc,0}$},
\end{aligned}
\end{equation}
\end{subequations}
where $\cA_\disc$ and $\cB_\disc$ are the discrete convections described in Definition \ref{def-gd-nsp}. The gradient scheme problem \eqref{nsp-gs} admits at least one solution. At any time step $(n + 1)$, we note that a gradient scheme for a stationary problem is solved. Applying the same reasoning as in \cite[Section 2]{A8}, we can easily derive the existence of a solution to the elliptic discrete problem due to the finite dimension of discrete spaces.

%--------------------------------------------------------------

Below, we introduce some properties of the space-time gradient discretisation to guarantee the convergence of corresponding gradient schemes. These properties are slight adaptations of the ones stated in \cite{EYMARD-1} to deal with the new discrete elements introduced in our gradient discretisation.

\begin{definition}
A sequence of a space-time gradient discretisation $(\disc_{T_m})_{m\in\NN}$ is said to be 
\begin{itemize}
\item coercive (resp. limit-conforming, resp. trilinear limit conforming, resp. compact) if its spatial component $(\disc_m)_{m\in\NN}$ is coercive in the sense of \cite[Definition 2.4]{Y-NSP-2021} (resp. limit-conforming in the sense of \cite[Definition 2.5]{Y-NSP-2021}, resp. trilinear limit-conforming in the sense of \cite[Definition 2.6]{Y-NSP-2021}, resp. compact in the sense of \cite[Definition 2.7]{Y-NSP-2021}).
%----------------------------
\item consistent if 
\begin{enumerate}
\item its spatial component $(\disc_m)_{m\in\NN}$ is consistent in the sense of \cite[Definition 2.5]{Y-NSP-2021},
\item for all $\v\in \L^2(\O)^d$ and for all $v\in L^2(\O)$, $J_{\disc_m}\v \to \v$ in $\L^2(\O)^d$ and $\widetilde J_{\disc_m}v \to v$ in $L^2(\O)$, as $m\to\infty$,
\item $\delta t_{\disc_m} \to 0$, as $m\to\infty$.
\end{enumerate}
 \end{itemize}
\end{definition}

%-----------------------------------------------------------------
%-----------------------------------------------------------------

%SECTION
\section{Discrete Energy Estimates And Main Results}\label{sec-3}
First, we derive some estimates on discrete solutions in order to establish our main results. From now on, let the quantity $C_\disc$ be given by
\begin{equation}
\begin{aligned}
C_\disc = \max_{\v\in \X_{\disc,0}-\{0\}}\dsp\frac{\|\Pi_\disc \v \|_{\L^2(\O)}}{\| \nabla_\disc \v \|_{\L^2(\O)^d}}
&+\max_{v\in \Y_{\disc,0}-\{0\}}\dsp\frac{\|\widetilde\Pi_\disc v \|_{L^2(\O)}}{\| \widetilde\nabla_\disc v \|_{L^2(\O)^d}}\\
&+\max_{v\in Y_{\disc,0}-\{0\}}\dsp\frac{\|\div_\disc v \|_{L^2(\O)}}{\| \nabla_\disc v \|_{L^2(\O)^d}}, 
\end{aligned}
\end{equation}
which connects to the discrete Poincar\'e inequality.
%------------------------------------------
 
\begin{lemma}\label{lemma-est-sol}
Assume that the conditions \eqref{assump-nsp} hold and  $(\u,p,S)$ is a solution to the discrete problem \eqref{nsp-weak}. Then there exist constants $C_1,C_2>0$ not depending on the discrete solution, such that, for all $n=0,...,N$,
\begin{equation}\label{eq-est-sol-1}
\| \Pi_\disc \u \|_{L^\infty(0,T;\L^2(\O))}^2
+\mu \| \nabla_\disc \u \|_{\L^2(\O\times(0,T))^d}^2
\leq C_1 + \|\Pi_\disc J_\disc \u_0\|_{\L^2(\O)}^2,
\end{equation}
and
\begin{equation}\label{eq-est-sol-2}
\| \widetilde\Pi_\disc S \|_{L^\infty(0,T;L^2(\O))}^2
+\| \widetilde\nabla_\disc S \|_{L^2(\O\times(0,T))^d}^2
\leq C_2 + \|\widetilde\Pi_\disc \widetilde J_\disc S_0\|_{L^2(\O)}^2.
\end{equation}
\end{lemma}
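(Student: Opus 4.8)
The plan is to carry out the classical discrete energy (\emph{a priori}) estimate: test each equation of the scheme \eqref{nsp-gs} at time level $n+1$ with the corresponding discrete unknown, exploit the structural properties built into Definition~\ref{def-gd-nsp}, telescope the discrete time derivative, and sum over the time steps. Write $\delta t^{(n+\frac{1}{2})}:=t^{(n+1)}-t^{(n)}$, so that $\delta_\disc^{(n+\frac{1}{2})}\u=(\Pi_\disc\u^{(n+1)}-\Pi_\disc\u^{(n)})/\delta t^{(n+\frac{1}{2})}$, and analogously for $\widetilde\delta_\disc^{(n+\frac{1}{2})}S$. Three observations make the estimate close. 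First, choosing $q=p^{(n+1)}\in Y_\disc$ in the incompressibility relation \eqref{nsp-gs-2} forces $\int_\O({\div}_\disc\u^{(n+1)})\chi_\disc p^{(n+1)}\ud\x=0$, so the pressure term disappears once we test \eqref{nsp-gs-1} with $\v=\u^{(n+1)}$. Second, the positivity assumptions $\cA_\disc(\u^{(n+1)},\u^{(n+1)})\ge 0$ and $\cB_\disc(\u^{(n+1)},S^{(n+1)},S^{(n+1)})\ge 0$ let us discard the nonlinear convection contributions. Third, the uniform lower bound $a_1\le V$ gives $\int_\O V(\widetilde\Pi_\disc S^{(n+1)})|\nabla_\disc\u^{(n+1)}|^2\ud\x\ge a_1\|\nabla_\disc\u^{(n+1)}\|_{\L^2(\O)^d}^2$, so the viscous term is coercive \emph{irrespective} of the temperature.

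For \eqref{eq-est-sol-1}, test \eqref{nsp-gs-1} with $\v=\u^{(n+1)}$. The discrete time-derivative term is handled by the elementary identity $(a-b)\cdot a=\half(|a|^2-|b|^2+|a-b|^2)$, which after multiplication by $\delta t^{(n+\frac{1}{2})}$ turns $\int_\O\delta_\disc^{(n+\frac{1}{2})}\u\cdot\Pi_\disc\u^{(n+1)}\ud\x$ into $\half\big(\|\Pi_\disc\u^{(n+1)}\|_{\L^2(\O)}^2-\|\Pi_\disc\u^{(n)}\|_{\L^2(\O)}^2\big)$ plus a nonnegative remainder that we drop. The right-hand side is controlled by Cauchy--Schwarz, the discrete Poincar\'e inequality encoded in $C_\disc$, and Young's inequality, $\int_\O\g\cdot\Pi_\disc\u^{(n+1)}\ud\x\le\frac{C_\disc^2}{2a_1}\|\g\|_{\L^2(\O)}^2+\frac{a_1}{2}\|\nabla_\disc\u^{(n+1)}\|_{\L^2(\O)^d}^2$, the last term being absorbed by half of the coercive viscous contribution. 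Summing over $n=0,\dots,m-1$ makes the $\Pi_\disc$-terms telescope, and using $\u^{(0)}=J_\disc\u_0$ together with $\sum_n\delta t^{(n+\frac{1}{2})}\|\g\|_{\L^2(\O)}^2\le\|\g\|_{L^2(0,T;\L^2(\O)^d)}^2$ yields, for every $m$, $\|\Pi_\disc\u^{(m)}\|_{\L^2(\O)}^2+a_1\sum_{n=0}^{m-1}\delta t^{(n+\frac{1}{2})}\|\nabla_\disc\u^{(n+1)}\|_{\L^2(\O)^d}^2\le\|\Pi_\disc J_\disc\u_0\|_{\L^2(\O)}^2+C_1$, with $C_1=a_1^{-1}C_\disc^2\|\g\|_{L^2(0,T;\L^2(\O)^d)}^2$ (the coercivity constant being the lower viscosity bound $a_1$). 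Since this holds for all $m$ with a right-hand side independent of $m$, taking the maximum over $m$ in the first term bounds $\|\Pi_\disc\u\|_{L^\infty(0,T;\L^2(\O))}^2$, while the choice $m=N$ bounds the full space-time gradient, giving \eqref{eq-est-sol-1}.

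Estimate \eqref{eq-est-sol-2} follows the same template applied to \eqref{nsp-gs-3} with $r=S^{(n+1)}$: the time-derivative term telescopes as before, the diffusion term contributes $\mu\|\widetilde\nabla_\disc S^{(n+1)}\|_{L^2(\O)^d}^2$, the convection term is discarded by $\cB_\disc(\u^{(n+1)},S^{(n+1)},S^{(n+1)})\ge0$, and the source is estimated by $\int_\O h\,\widetilde\Pi_\disc S^{(n+1)}\ud\x\le\frac{C_\disc^2}{2\mu}\|h\|_{L^2(\O)}^2+\frac{\mu}{2}\|\widetilde\nabla_\disc S^{(n+1)}\|_{L^2(\O)^d}^2$, again using $C_\disc$ and Young's inequality; summing over $n$ gives \eqref{eq-est-sol-2} with $C_2=\mu^{-1}C_\disc^2\|h\|_{L^2(0,T;L^2(\O))}^2$. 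In both cases the constants depend only on the data and on $C_\disc$, not on the discrete solution, as required.

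I do not expect a serious obstacle here: the coupling between the two equations, which is the genuine difficulty of the model, is harmless at the level of the \emph{a priori} estimate because the temperature enters the momentum balance only through $V(\widetilde\Pi_\disc S^{(n+1)})$, and the uniform bounds $a_1\le V\le a_2$ decouple the two energy inequalities entirely --- no control of $S$ is needed to bound $\u$, and the $S$-estimate never uses $\u$ beyond the sign of $\cB_\disc$. The only points demanding care are bookkeeping ones: writing the discrete time derivative so that the telescoping is exact, verifying that the pressure term truly cancels via the admissible test function $q=p^{(n+1)}$, and balancing the Young's-inequality constants so that the gradient terms on the right are strictly dominated by the coercive terms on the left.
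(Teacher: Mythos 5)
Your proposal is correct and follows essentially the same route as the paper's own proof: test \eqref{nsp-gs-1} and \eqref{nsp-gs-3} with the discrete solution itself, cancel the pressure term via $q=p^{(n+1)}$ in \eqref{nsp-gs-2}, drop the convection terms by the positivity of $\cA_\disc$ and $\cB_\disc$, use $a_1\le V$ for coercivity, telescope the time-derivative term with the identity $(a-b)\cdot a\ge\frac{1}{2}(|a|^2-|b|^2)$, sum over time steps, and finish with Cauchy--Schwarz, the discrete Poincar\'e constant $C_\disc$, and Young's inequality. If anything, your write-up is more explicit than the paper's (you track the constants $C_1$, $C_2$ and correctly state the algebraic identity, where the paper has a sign typo), and you rightly observe that the natural coercivity constant in \eqref{eq-est-sol-1} is $a_1$ rather than the $\mu$ appearing in the statement.
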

%-----------------------------------------

\begin{proof}
In \eqref{nsp-gs-1} and \eqref{nsp-gs-3}, let $\v:=\delta t^{n+\frac{1}{2}}\u^{(n+1)}$ and $r:=\delta t^{n+\frac{1}{2}}S^{(n+1)}$ to attain
\[
\begin{aligned}
&\dsp\int_\O \Big(\Pi_\disc \u^{(n+1)}-\Pi_\disc \u^{(n)}\Big) \cdot \Pi_\disc \u^{(n+1)} \ud \x
+a_1\dsp\int_{t^{(n)}}^{t^{(n+1)}}\dsp\int_\O | \nabla_\disc \u^{(n+1)} |^2 \ud \x \ud t\\
&+\dsp\int_{t^{(n)}}^{t^{(n+1)}}\cA_\disc(\u^{(n+1)},\u^{(n+1)})\ud t
-\dsp\int_{t^{(n)}}^{t^{(n+1)}}\dsp\int_\O({\div}_\disc \u^{(n+1)}) \chi_\disc p^{(n+1)} \ud \x \ud t\\
&\leq\dsp\int_{t^{(n)}}^{t^{(n+1)}}\dsp\int_\O \g \Pi_\disc \u^{(n+1)} \ud \x \ud t,
\end{aligned}
\]
and
\[
\begin{aligned}
&\dsp\int_\O \Big(\widetilde\Pi_\disc S^{(n+1)}-\widetilde\Pi_\disc S^{(n)}\Big) \cdot \widetilde\Pi_\disc S^{(n+1)} \ud \x
+\mu\dsp\int_{t^{(n)}}^{t^{(n+1)}}\dsp\int_\O | \widetilde\nabla_\disc S^{(n+1)} |^2 \ud \x \ud t\\
&+\dsp\int_{t^{(n)}}^{t^{(n+1)}}\cB_\disc(S^{(n+1)},S^{(n+1)},S^{(n+1)})\ud t\\
&=\dsp\int_{t^{(n)}}^{t^{(n+1)}}\dsp\int_\O h \widetilde\Pi_\disc S^{(n+1)} \ud \x \ud t.
\end{aligned}
\]
%-----------------------------------------
Taking $q=p$ implies $\int_{t^{(n)}}^{t^{(n+1)}}\int_\O({\div}_\disc \u^{(n+1)}) \chi_\disc p^{(n+1)} \ud \x \ud t=0$. Utilizing the positivity of terms $\cA_\disc(\u^{(n+1)},\u^{(n+1)})$ and $\cB_\disc(S^{(n+1)},S^{(n+1)},S^{(n+1)})$, it follows that
\[
\begin{aligned}
&\dsp\int_\O \Big(\Pi_\disc \u^{(n+1)}-\Pi_\disc \u^{(n)}\Big) \cdot \Pi_\disc \u^{(n+1)} \ud \x
+a_1\dsp\int_{t^{(n)}}^{t^{(n+1)}}\dsp\int_\O | \nabla_\disc \u^{(n+1)} |^2 \ud \x \ud t\\
&\leq\dsp\int_{t^{(n)}}^{t^{(n+1)}}\dsp\int_\O \g \Pi_\disc \u^{(n+1)} \ud \x \ud t,
\end{aligned}
\]
and
\[
\begin{aligned}
&\dsp\int_\O \Big(\widetilde\Pi_\disc S^{(n+1)}-\widetilde\Pi_\disc S^{(n)}\Big) \cdot \widetilde\Pi_\disc S^{(n+1)} \ud \x
+\mu\dsp\int_{t^{(n)}}^{t^{(n+1)}}\dsp\int_\O | \widetilde\nabla_\disc S^{(n+1)} |^2 \ud \x \ud t\\
&\leq\dsp\int_{t^{(n)}}^{t^{(n+1)}}\dsp\int_\O h \widetilde\Pi_\disc S^{(n+1)} \ud \x \ud t.
\end{aligned}
\]
%-----------------------------------------
From the inequality $(a-b)\cdot a \geq \dsp\frac{1}{2}(|a|^2+|b|^2)$, we conclude
\[
\begin{aligned}
&\frac{1}{2}\dsp\int_\O \Big[ |\Pi_\disc \u^{(n+1)}|^2 - |\Pi_\disc \u^{(n)}|^2 \Big]\ud \x 
+a_1\dsp\int_{t^{(n)}}^{t^{(n+1)}}\dsp\int_\O | \nabla_\disc \u^{(n+1)} |^2 \ud \x \ud t\\
&\leq\dsp\int_{t^{(n)}}^{t^{(n+1)}}\dsp\int_\O \g \Pi_\disc \u^{(n+1)} \ud \x \ud t,
\end{aligned}
\]
and
\[
\begin{aligned}
&\frac{1}{2}\dsp\int_\O \Big[ |\widetilde\Pi_\disc S^{(n+1)}|^2 - |\widetilde\Pi_\disc S^{(n)}|^2 \Big]\ud \x 
+\mu\dsp\int_{t^{(n)}}^{t^{(n+1)}}\dsp\int_\O | \widetilde\nabla_\disc S^{(n+1)} |^2 \ud \x \ud t\\
&\leq\dsp\int_{t^{(n)}}^{t^{(n+1)}}\dsp\int_\O h \widetilde\Pi_\disc S^{(n+1)} \ud \x \ud t.
\end{aligned}
\]
%-----------------------------------------
Let $0 \leq n \leq N$. Summation of the above formulations over $n\in\{0,...,m-1\}$, we get, for some $m\in\{0,...,N\}$,
\[
\begin{aligned}
&\frac{1}{2}\| \Pi_\disc \u^{(m)} \|_{\L^2(\O)}^2  
+a_1\| \nabla_\disc \u^{(m)} \|_{\L^2(\O\times(0,t^{(m)})^d}^2\\
&\leq\dsp\int_0^{t^{(m)}}\dsp\int_\O \g \Pi_\disc \u \ud \x \ud t
+\frac{1}{2}\| \Pi_\disc J_\disc \u_0 \|_{\L^2(\O)}^2,
\end{aligned}
\]
and
\[
\begin{aligned}
&\frac{1}{2}\| \widetilde\Pi_\disc S^{(m)} \|_{L^2(\O)}^2  
+\mu\| \widetilde\nabla_\disc S^{(m)} \|_{L^2(\O\times(0,t^{(m)})^d}^2\\
&\leq\dsp\int_0^{t^{(m)}}\dsp\int_\O h \widetilde\Pi_\disc S \ud \x \ud t
+\frac{1}{2}\| \widetilde\Pi_\disc \widetilde J_\disc S_0 \|_{L^2(\O)}^2.
\end{aligned}
\]
%-----------------------------------------
The assertion now follows by performing the Cauchy–Schwarz and Young’s inequalities, using the definition of $C_\disc$, and taking the supremum on $m=\in\{0,...,N\}$, respectively.
\end{proof}
%-----------------------------------------

\begin{definition}
The semi--norms $|\cdot|_{\star,\disc}$ and $|\cdot|_{\star,\widetilde\disc}$ are respectively defined on $\L^2(\O)$ and $L^2(\O)$ by
\begin{equation}\label{eq-def-norm-1}
|\w|_{\star,\disc}:=\dsp\sup
\Big\{
\w(\x)\cdot\Pi_\disc \v(\x) \ud \x \; : \; \v\in E_\disc \mbox{ such that $\|\v\|_\disc=1$}
\Big\},
\end{equation}
and
\begin{equation}\label{eq-def-norm-2}
|w|_{\star,\widetilde\disc}:=\dsp\sup
\Big\{
w(\x)\cdot\widetilde\Pi_\disc r(\x) \ud \x \; : \; r\in \Y_{\disc,0} \mbox{ such that $\|r\|_\disc=1$}
\Big\},
\end{equation}
where $E_\disc=\{\v\in \X_{\disc,0}\;:\; \div_\disc \v =0\}$.
\end{definition}
%--------------------------------------------------------------

\begin{lemma}
\label{lemma-est-dual}
Assume \eqref{assump-nsp} holds. If $(\u,p,S)$ is a discrete solution to \eqref{nsp-gs}, then there exists constants $C_3,C_4>0$, such that
\begin{equation}\label{eq-est-dual}  
\dsp\int_0^T \| \delta_\disc \u \|_{\star,\disc} \ud t \leq C_3
\mbox{ and }
\dsp\int_0^T \| \widetilde\delta_\disc S \|_{\star,\widetilde\disc} \ud t \leq C_4.
\end{equation}
\end{lemma}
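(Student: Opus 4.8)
The plan is to test each discrete equation against functions normalised in the discrete gradient norm, take the supremum that defines the dual semi-norms $\|\cdot\|_{\star,\disc}$ and $\|\cdot\|_{\star,\widetilde\disc}$, and then integrate in time, closing the two estimates with the energy bounds \eqref{eq-est-sol-1}--\eqref{eq-est-sol-2} of Lemma \ref{lemma-est-sol}.

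For the velocity bound I would fix $\v\in E_\disc$ with $\|\nabla_\disc\v\|_{\L^2(\O)^d}=1$ and use it in \eqref{nsp-gs-1}. Because $\div_\disc\v=0$ on $E_\disc$, the pressure term disappears---this is exactly why $\|\cdot\|_{\star,\disc}$ is defined by testing only against $E_\disc$, and it is the single structural point on the velocity side. Recalling that on $(t^{(n)},t^{(n+1)})$ the function $\delta_\disc\u$ is the constant $\delta_\disc^{(n+\frac{1}{2})}\u=(\Pi_\disc\u^{(n+1)}-\Pi_\disc\u^{(n)})/\delta t^{(n+\frac{1}{2})}$, equation \eqref{nsp-gs-1} rewrites $\int_\O\delta_\disc\u\cdot\Pi_\disc\v\ud\x$ as the sum of the viscous, convective and source terms, which I would bound termwise: the viscous term by $a_2\|\nabla_\disc\u^{(n+1)}\|_{\L^2(\O)^d}$ via $V\le a_2$ and Cauchy--Schwarz, the convective term by $\alpha_\disc\|\nabla_\disc\u^{(n+1)}\|_{\L^2(\O)^d}$ via continuity of $\cA_\disc$, and the source term by $C_\disc\|\g\|_{\L^2(\O)}$ via the definition of $C_\disc$. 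Taking the supremum over such $\v$ yields the pointwise-in-time estimate
\[
\|\delta_\disc\u\|_{\star,\disc}\le(a_2+\alpha_\disc)\|\nabla_\disc\u^{(n+1)}\|_{\L^2(\O)^d}+C_\disc\|\g\|_{\L^2(\O)}\quad\text{on }(t^{(n)},t^{(n+1)}).
\]

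I would then integrate over $(0,T)$, i.e. sum the interval contributions with weights $\delta t^{(n+\frac{1}{2})}$, and apply the discrete Cauchy--Schwarz inequality in time, using $\sum_n\delta t^{(n+\frac{1}{2})}=T$. This turns the gradient sum into $T^{1/2}\|\nabla_\disc\u\|_{\L^2(\O\times(0,T))^d}$ and the source sum into $T^{1/2}\|\g\|_{L^2(0,T;\L^2(\O))}$; the first factor is controlled by \eqref{eq-est-sol-1}, which delivers $C_3$. The temperature estimate is obtained the same way, now testing \eqref{nsp-gs-3} with $r\in\Y_{\disc,0}$, $\|\widetilde\nabla_\disc r\|_{L^2(\O)^d}=1$, with no pressure term to remove; the diffusion and source terms give $\mu\|\widetilde\nabla_\disc S^{(n+1)}\|_{L^2(\O)^d}$ and $C_\disc\|h\|_{L^2(\O)}$ as before.

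The term that needs care, and the one I expect to be the main obstacle, is the trilinear convection $\cB_\disc(\u^{(n+1)},S^{(n+1)},r)$: its continuity bound is $\beta_\disc\|\nabla_\disc\u^{(n+1)}\|_{\L^2(\O)^d}\|\widetilde\nabla_\disc S^{(n+1)}\|_{L^2(\O)^d}$, a \emph{product} of two gradient norms that carries no spare $\delta t$ or $T^{1/2}$ factor. After summing in time I would not use Poincar\'e-type arguments but instead apply Cauchy--Schwarz directly to the time sum of this product, bounding it by $\|\nabla_\disc\u\|_{\L^2(\O\times(0,T))^d}\|\widetilde\nabla_\disc S\|_{L^2(\O\times(0,T))^d}$, both factors being finite by \eqref{eq-est-sol-1}--\eqref{eq-est-sol-2}; this yields $C_4$. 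A final point, if $C_3$ and $C_4$ are to be uniform along a sequence $(\disc_{T_m})_m$, is that $\alpha_\disc$, $\beta_\disc$ and $C_\disc$ must be bounded independently of $m$, which is guaranteed by the coercivity and continuity hypotheses.
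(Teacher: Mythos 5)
Your proposal is correct and follows essentially the same route as the paper: test \eqref{nsp-gs-1} against $\v\in E_\disc$ (so that $\div_\disc\v=0$ kills the pressure term) and \eqref{nsp-gs-3} against $r\in\Y_{\disc,0}$, both normalised in the discrete gradient norm, bound the remaining terms via $V\le a_2$, the continuity constants and $C_\disc$, take the supremum defining the dual semi-norms, multiply by the time steps, sum over $n$, and close with \eqref{eq-est-sol-1}--\eqref{eq-est-sol-2}. In fact your write-up is more complete than the paper's on one point: the paper's displayed inequalities omit the convection terms $\cA_\disc(\u^{(n+1)},\v)$ and $\cB_\disc(\u^{(n+1)},S^{(n+1)},r)$ altogether, even though these cannot be discarded by a sign argument (positivity only holds when the test function equals the solution itself); your explicit bounds $\alpha_\disc\|\nabla_\disc\u^{(n+1)}\|_{\L^2(\O)^d}$ and $\beta_\disc\|\nabla_\disc\u^{(n+1)}\|_{\L^2(\O)^d}\|\widetilde\nabla_\disc S^{(n+1)}\|_{L^2(\O)^d}$, together with the Cauchy--Schwarz-in-time treatment of the latter product (which carries no spare factor of $\delta t$), are exactly what is needed to make the paper's argument rigorous, and your closing remark about requiring $\alpha_\disc$, $\beta_\disc$, $C_\disc$ bounded along a sequence of discretisations is the right uniformity caveat, which the paper leaves implicit in its coercivity assumption.
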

%--------------------------------------------------

\begin{proof}
With any $\v\in E_\disc$ and $r\in \Y_{\disc,0}$, the scheme \eqref{nsp-gs} can be rewritten as follows
\[
\begin{aligned}
\dsp\int_\O \delta_\disc^{(n+\frac{1}{2})}\u(\x) \Pi_\disc \v(\x) \ud \x
&\leq a_2 \| \nabla_\disc \u^{(n+1)} \|_{\L^2(\O\times(0,T))^d} 
\| \nabla_\disc \v \|_{\L^2(\O\times(0,T))^d}  \\
&\quad+C_\disc\|\g\|_{\L^2(\O)}\| \nabla_\disc \v\|_{\L^2(\O\times(0,T))^d} \|,
\end{aligned}
\]
and
\[
\begin{aligned}
\dsp\int_\O \widetilde\delta_\disc^{(n+\frac{1}{2})}S(\x) \Pi_\disc r(\x) \ud \x
&\leq \mu \| \widetilde\nabla_\disc S^{(n+1)} \|_{\L^2(\O\times(0,T))^d} \| 
\| \widetilde\nabla_\disc r \|_{\L^2(\O\times(0,T))^d} \| \\
&\quad+C_\disc\|h\|_{L^2(\O)}\| \widetilde\nabla_\disc r\|_{L^2(\O\times(0,T))^d} \|.
\end{aligned}
\]
%--------------------------
The assertion then follows by taking the supremum over $\v\in E_\disc$ with $\|\v\|_\disc=1$ (resp. over $r\in \Y_{\disc,0}$ with $\|r\|_{\widetilde\disc}=1$ ), multiplying by $\delta t^{(n+1)}$, summing over $n\in\{0,...,N-1\}$, and using  \eqref{eq-est-sol-1} and \eqref{eq-est-sol-2}.
%--------------------------------------------------

\end{proof}
%--------------------------------------------------

We now state and prove the following result concerning the convergence of the discrete scheme.
\begin{theorem}
Assume that Assumptions \eqref{assump-nsp} are satisfied. If $(\disc_{T_m})_{m\in\NN}$ is a sequence of a space-time gradient discretisation, that is coercive, consistent, limit conforming, trilinear limit conforming, and compact, and $(\u_m,p_m,S_m)$ is a solution to the approximate problem \eqref{nsp-gs} for any $m\in\NN$, then the problem \eqref{nsp-weak} has a solution $(\bar\u,\bar p,\bar S)$, such that, up to a subsequence, as $m \to \infty$, the following convergences hold:
\begin{itemize}
\item $(\Pi_{\disc_m}\u_m,\widetilde\Pi_{\disc_m}S_m)$ converges to $(\bar \u,\bar S)$ in $L^2(0,T;\L^2(\O)) \times L^2(0,T;L^2(\O))$,
%--------------------------------------------
\item $(\nabla_{\disc_m}\u_m,\widetilde\nabla_{\disc_m}S_m)$ converges weakly to $(\nabla\bar \u,\nabla\bar S)$ in $\L^2(\O\times(0,T))^d \times L^2(\O\times(0,T))^d$,
%--------------------------------------------
\item $\chi_{\disc_m} p_m$ converges weakly to $\bar p$ in $L^2(\O\times(0,T))$.
\end{itemize}
\end{theorem}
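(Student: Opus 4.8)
The plan is to run the standard compactness argument for gradient schemes: extract weakly convergent subsequences from the a priori bounds, upgrade the function reconstructions to strong convergence, identify the weak gradient and pressure limits, and then pass to the limit term by term in \eqref{nsp-gs}. First I would use the coercivity of $(\disc_{T_m})_{m\in\NN}$, which keeps $C_{\disc_m}$ bounded, together with Lemma \ref{lemma-est-sol}. These yield uniform bounds on $\Pi_{\disc_m}\u_m$, $\widetilde\Pi_{\disc_m}S_m$ in $L^\infty(0,T;\L^2(\O))$ and on $\nabla_{\disc_m}\u_m$, $\widetilde\nabla_{\disc_m}S_m$ in the space--time $L^2$ norms. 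Passing to a subsequence, $\nabla_{\disc_m}\u_m$ and $\widetilde\nabla_{\disc_m}S_m$ converge weakly in $\L^2(\O\times(0,T))^d$ and $L^2(\O\times(0,T))^d$, while $\Pi_{\disc_m}\u_m$, $\widetilde\Pi_{\disc_m}S_m$ converge weakly in $L^2(0,T;\L^2(\O))$. A uniform $L^2(\O\times(0,T))$ bound on the pressure is obtained from the inf--sup stability encoded in \eqref{new-constant} applied to the momentum equation \eqref{nsp-gs-1} together with these bounds, so that $\chi_{\disc_m}p_m \rightharpoonup \bar p$ weakly along a further subsequence.

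Next I would upgrade the reconstructions to strong convergence. The dual--norm estimates of Lemma \ref{lemma-est-dual} on the discrete time derivatives $\delta_{\disc_m}\u_m$ and $\widetilde\delta_{\disc_m}S_m$, combined with the compactness hypothesis, furnish (through a discrete Aubin--Simon argument in the spirit of \cite{EYMARD-1}) the strong convergences $\Pi_{\disc_m}\u_m \to \bar\u$ in $L^2(0,T;\L^2(\O))$ and $\widetilde\Pi_{\disc_m}S_m \to \bar S$ in $L^2(0,T;L^2(\O))$, which is the first asserted convergence. The limit--conforming property then identifies the weak gradient limits as $\nabla\bar\u$ and $\nabla\bar S$, forcing $\bar\u\in L^2(0,T;\H_0^1(\O)^d)$ and $\bar S\in L^2(0,T;H_0^1(\O))$, and passing to the limit in the discrete divergence constraint \eqref{nsp-gs-2} tested against interpolated functions yields $\div\bar\u=0$, so $\bar\u$ lies in the divergence--free space.

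I would then pass to the limit in \eqref{nsp-gs-1} and \eqref{nsp-gs-3} with test functions of the separated form $\v(\x)\varphi(t)$, with $\varphi$ smooth and $\varphi(T)=0$, the spatial part being replaced by the interpolants supplied by consistency so that $\nabla_{\disc_m}(\cdot)\to\nabla\v$ strongly and $\Pi_{\disc_m}(\cdot)\to\v$. The discrete time--derivative terms are handled by discrete integration by parts (Abel summation) in $n$, which moves the time increment onto $\varphi$ and produces, using the strong convergence of $\Pi_{\disc_m}\u_m$, $\widetilde\Pi_{\disc_m}S_m$ and the consistency of $J_{\disc_m}$, $\widetilde J_{\disc_m}$, the continuous terms $-\int_0^T\int_\O \bar\u\,\dr_t\v\,\ud\x\,\ud t$ and $-\int_\O \u_0\cdot\v(\x,0)\,\ud\x$ (and the analogues for $\bar S$). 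The linear diffusion term for $\bar S$ and the source terms pass to the limit directly from the weak/strong convergences, and the pressure term converges by the weak convergence of $\chi_{\disc_m}p_m$ against the fixed, strongly convergent $\div_{\disc_m}$ of the test function.

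The hard part will be the nonlinear terms. For the temperature--dependent viscosity, the strong $L^2$ convergence of $\widetilde\Pi_{\disc_m}S_m$ gives a.e.\ convergence along a subsequence; with the continuity of $V$ and the uniform bound $a_1\le V\le a_2$, dominated convergence yields $V(\widetilde\Pi_{\disc_m}S_m)\to V(\bar S)$ strongly in every $L^q(\O\times(0,T))$, $q<\infty$. Multiplying by the strongly convergent $\nabla_{\disc_m}$ of the test function and pairing with the weakly convergent $\nabla_{\disc_m}\u_m$ then delivers $\int_0^T\int_\O V(\bar S)\,\nabla\bar\u:\nabla\v\,\ud\x\,\ud t$ (up to the time weight $\varphi$). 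The convection terms $\cA_{\disc_m}$ and $\cB_{\disc_m}$ are passed to the limit using the trilinear limit--conforming property, which is tailored to convert the strong convergence of $\Pi_{\disc_m}\u_m$, $\widetilde\Pi_{\disc_m}S_m$ together with the weak convergence of the discrete gradients into the continuous forms $\cA(\bar\u,\bar\u,\v)$ and $\cB(\bar\u,\bar S,r)$. I expect this viscosity passage and the convection passage to be the genuine obstacle, since they are exactly where the strong convergence established above is indispensable; in particular the continuity of $V$ is what legitimises the viscosity limit beyond the mere boundedness stated in \eqref{assump-nsp}. Collecting all limits shows that $(\bar\u,\bar p,\bar S)$ satisfies \eqref{nsp-weak-1}--\eqref{nsp-weak-3}, which completes the proof.
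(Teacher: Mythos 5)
Your proposal follows essentially the same route as the paper's proof: the a priori bounds of Lemma \ref{lemma-est-sol} together with coercivity, consistency and limit-conformity give the weak limits (the paper invokes \cite[Lemma 4.8]{B1}); the dual-norm estimates of Lemma \ref{lemma-est-dual} together with compactness give the strong convergence of the reconstructions via a discrete Aubin--Simon argument (the paper invokes \cite[Theorem 4.14]{B1}); and the limit in \eqref{nsp-gs} is passed using interpolated test functions, discrete integration by parts in time, dominated convergence for the viscosity, and trilinear limit-conformity for the convection terms.

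Two places where you are in fact more careful than the paper are worth noting. First, the paper's proof never establishes any estimate on the pressure, even though the theorem asserts $\chi_{\disc_m}p_m \rightharpoonup \bar p$ weakly in $L^2(\O\times(0,T))$; your step deriving a uniform $L^2$ bound from the inf--sup quantity \eqref{new-constant} applied to the momentum equation \eqref{nsp-gs-1} is precisely the argument the paper omits. Second, you correctly observe that the dominated-convergence passage $V(\widetilde\Pi_{\disc_m}S_m)\to V(\bar S)$ requires continuity of $V$, which is not among the hypotheses \eqref{assump-nsp} (only $V\in L^\infty(\RR)$ with $a_1\le V\le a_2$ is assumed); the paper hides this behind the phrase ``the assumptions inforced on $V$,'' so your explicit flag identifies a genuine implicit assumption rather than a flaw in your own argument.
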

%-----------------------------------

\begin{proof}
Thanks to Estimates \eqref{eq-est-sol-1} and \eqref{eq-est-sol-2} and the consistency and the limit–conformity properties, the hypothesis of \cite[Lemma 4.8]{B1} are satisfied. As a consequence, there exists $(\bar\u,\bar S) \in L^2(0,T;\L^2(\O)) \times L^2(0,T;L^2(\O))$ such that $(\Pi_{\disc_m}\u_m,\widetilde\Pi_{\disc_m}S_m)$ converges to $(\bar\u,\bar S)$ weakly in $L^2(0,T;\L^2(\O)) \times L^2(0,T;L^2(\O))$, and $(\nabla_{\disc_m}\u_m,\widetilde\nabla_{\disc_m}S_m)$ converges to $(\nabla\bar\u,\nabla\bar S)$ weakly in $\L^2(\O\times((0,T))^d \times L^2(\O\times((0,T))^d$. On the other hand, \cite[Theorem 4.14]{B1} provides the strong convergence of $(\Pi_{\disc_m}\u_m,\widetilde\Pi_{\disc_m}S_m)$ to $(\bar\u,\bar S)$ in $L^\infty(0,T;\L^2(\O)) \times L^\infty(0,T;L^2(\O))$, thanks to Estimate \eqref{eq-est-dual} to the three properties; the consistency, limit--conformity and the compactness.
%-------------------------------

We now prove that the pair $(\bar \u,\bar S)$ satisfies equations in our continuous problem. To do so, consider $\bar\bpsi \in L^2(0,T;\L^2(\O)^d)$ and $\bar\varphi \in L^2(0,T;L^2(\O)^d)$ such that $\dr_t\bar\bpsi \in \L^2(\O\times(0,T))$ and $\dr_t\bar\varphi \in L^2(\O\times(0,T))$, and $\bar\bpsi(T,\cdot)=0$ and $\bar\varphi(T,\cdot)=0$. Utilizing the interpolation results proposed in \cite{B1},  we can construct $(\w_m,\widetilde w_m) \in (\X_{\disc,0} \times Y_{\disc,0})$ such that the following convergences are fulfilled:$(\Pi_{\disc_m}w_m, \widetilde\Pi_{\disc_m}\widetilde w_m)$ converges to $(\bar\bpsi,\bar\varphi)$ strongly in $L^2(0,T;\L^2(\O)) \times L^2(0,T;L^2(\O))$. $(\delta_{\disc_m}\w_m, \widetilde\delta_{\disc_m}\widetilde w_m)$ converges to $(\dr_t \bar\bpsi,\dr_t \bar\varphi)$ strongly in $\L^2(\O\times(0,T)) \times L^2(\O\times(0,T))$.
%------------------------------------------------
Inserting $\v_m^{(n+1)}:=\delta t_m^{(n+\half)}\w_m$ and $r_m^{(n+1)}:=\delta t_m^{(n+\half)}\widetilde w_m$ in the scheme \eqref{nsp-gs}, we obtain
%------------------------------
\begin{equation}\label{nsp-gs-1-proof}
\begin{aligned}
&\dsp\sum_{n=0}^{N_m-1}\dsp\int_\O [\Pi_{\disc_m}\u_m^{(n+1)}-\Pi_{\disc_m}\u^{(n)}] \cdot \Pi_{\disc_m} \w_m \ud \x\\
&\quad+\dsp\int_0^T\dsp\int_\O V(\widetilde{\Pi}_{\disc_m}S_m)\nabla_{\disc_m} \u_m: \nabla_{\disc_m}\w_m \ud \x \ud t\\
&\quad+\dsp\int_0^T\cA_{\disc_m}(\u_m,\w_m) \ud t
\quad-\dsp\int_0^T\dsp\int_\O({\div}_{\disc_m} \w) \chi_{\disc_m} p_m \ud \x \ud t\\
&=\dsp\int_0^T\dsp\int_\O \g \Pi_{\disc_m} \w \ud \x \ud t,
\end{aligned}
\end{equation}
%---------------------------------------------
\begin{equation}\label{nsp-gs-2-proof}
\dsp\int_\O {\div}_\disc \u^{(n+1)} \chi_\disc q \ud x =0, 
\end{equation}
%---------------------------------------------
\begin{equation}\label{nsp-gs-3-proof}
\begin{aligned}
&\dsp\sum_{n=0}^{N_m-1}\dsp\int_\O [\widetilde{\Pi}_{\disc_m}S_m^{(n+1)}-\widetilde{\Pi}_{\disc_m}S^{(n)}] \cdot \widetilde{\Pi}_{\disc_m} \widetilde w_m\ud \x
+\mu \dsp\int_0^T\dsp\int_\O \widetilde{\nabla}_{\disc_m} S_m\cdot \widetilde{\nabla}_{\disc_m} \widetilde w_m \ud \x \ud t\\
&\quad+\dsp\int_0^T\cB_{\disc_m}(\u_m,S_m,\widetilde w_m)\\
&=\dsp\int_0^T\dsp\int_\O h\widetilde{\Pi}_{\disc_m} \widetilde w_m \ud \x \ud t,
\end{aligned}
\end{equation}
%-----------------------------------------
Note that $\w^{(N)}=0$ and $\widetilde w^{(N)}=0$. Performing the discrete integration by the part formula \cite[Equation (D.15)]{B1} to the above equality implies
%-----------------------------
\begin{equation}\label{nsp-gs-4-proof}
\begin{aligned}
&-\dsp\int_0^T\dsp\int_\O \Pi_{\disc_m}\u_m\cdot \delta_{\disc_m} \w_m \ud \x \ud t
-\dsp\int_\O \Pi_{\disc_m}\u_m^{(0)}\cdot \delta_{\disc_m} \w_m^{(0)} \ud \x\\
&\quad+\dsp\int_0^T\dsp\int_\O V(\widetilde{\Pi}_{\disc_m}S_m)\nabla_{\disc_m} \u_m: \nabla_{\disc_m}\w_m \ud \x \ud t\\
&\quad+\dsp\int_0^T\cA_{\disc_m}(\u_m,\w_m) \ud t
-\dsp\int_0^T\dsp\int_\O({\div}_{\disc_m} \w) \chi_{\disc_m} p_m \ud \x \ud t\\
&=\dsp\int_0^T\dsp\int_\O \g \Pi_{\disc_m} \w \ud \x \ud t,
\end{aligned}
\end{equation}
%---------------------------------------------
\begin{equation}\label{nsp-gs-5-proof}
\dsp\int_\O {\div}_\disc \u^{(n+1)} \chi_\disc q \ud x =0, 
\end{equation}
%---------------------------------------------
\begin{equation}\label{nsp-gs-6-proof}
\begin{aligned}
&\quad-\dsp\int_0^T\dsp\int_\O \widetilde{\Pi}_{\disc_m}S_m \widetilde{\delta}_{\disc_m} \widetilde{w}_m \ud \x \ud t
-\dsp\int_\O \widetilde{\Pi}_{\disc_m}S_m^{(0)}\widetilde{\delta}_{\disc_m}\widetilde{w}_m^{(0)} \ud \x\\
&\quad+\mu \dsp\int_0^T\dsp\int_\O \widetilde{\nabla}_{\disc_m} S_m\cdot \widetilde{\nabla}_{\disc_m} \widetilde w_m \ud \x \ud t
+\dsp\int_0^T\cB_{\disc_m}(\u_m,S_m,\widetilde w_m)\\
&=\dsp\int_0^T\dsp\int_\O h\widetilde{\Pi}_{\disc_m} \widetilde w_m \ud \x \ud t,
\end{aligned}
\end{equation}
%-----------------------------------------
it follows directly from the consistency proerty that $(\Pi_{\disc_m}\u_m^{(0)},\widetilde{\Pi}_{\disc_m}S_m^{(0)})$ converges to $(\u_0,S_0)$ strongly in $(\L^2(\O) \times L^2(\O))$. Now, the strong convergence of $\Pi_{\disc_m}\u_m$ and $\widetilde{\Pi}_{\disc_m}S_m$ established here, and the assumptions inforced on $V$ enable us to apply the dominated convergence theorem. The assertion, $(\bar\u,\bar S,\bar p)$ is a continuous solution, follows from passing to the limit in each term of the above representation.
%-----------------------------------------
\end{proof}

%===============================================================================
%===============================================================================

\section{Numerical Tests}\label{sec-numerical}
In this section, we perform the hybrid finite volume methods to solve the Navier-Stokes problem \eqref{nsp} on the square domain $\O=[-1,1]^2$. The scheme is a kind of polytopal method that preserves the physical properties of models. With a specific choice of the discrete elements given in Definition \ref{def-gd-nsp-T}, we can show that the method can be presented in the gradient scheme format (the scheme \eqref{nsp-gs}), we refer the reader to \cite[Section 4]{Y-NSP-2021} for details. We use two different types of polygonal meshes to partition the spatial domain, as displayed in Figure \ref{fig-mesh}. A backward Euler discretisation is employed for the time step, and the test is performed at $T=0.1$.  

%-------------------------------------------------
\begin{figure}[ht]\label{fig-mesh}
	\begin{center}
	\begin{tabular}{cc}
	\includegraphics[width=0.40\linewidth]{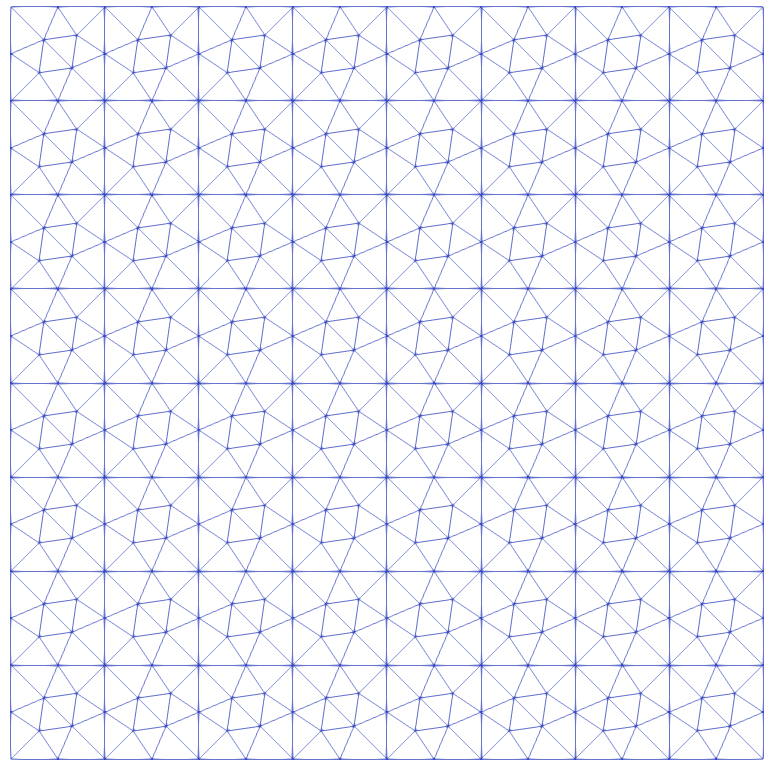} & \includegraphics[width=0.40\linewidth]{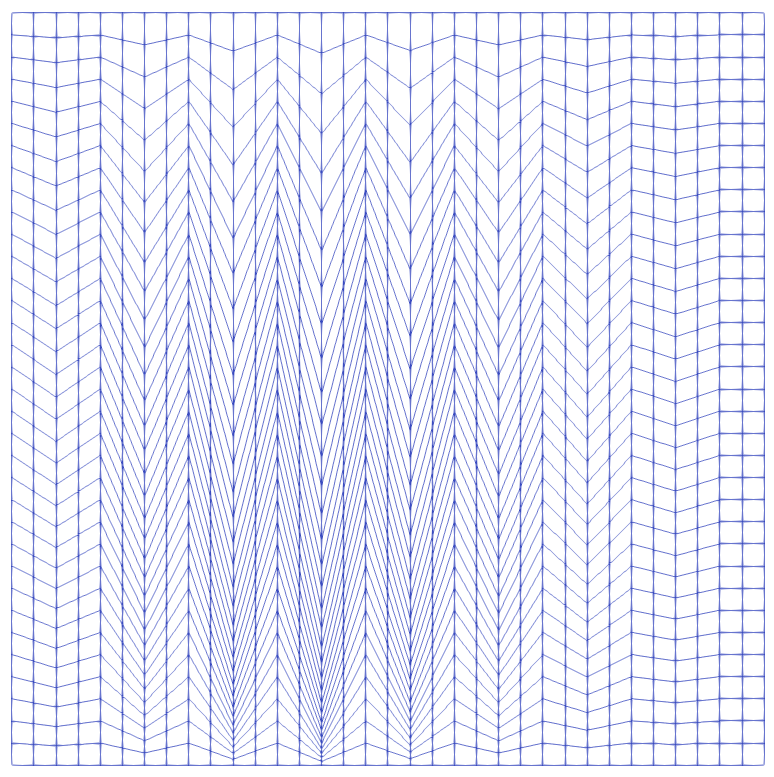}\\
	 \texttt{Triangular}  & \texttt{Distroted}
	\end{tabular}
	\end{center}
	\caption{Sample of the polygonal meshes.}
	\label{fig-1}
\end{figure}
%-------------------------------------------------

In order to investigate the quality of the hybrid finite volume methods, we measure the errors as the difference between the exact solution and the discrete one in a suitable norm. More precisely, we use the following quantities:
\begin{equation}\label{eq-error}
E_1:=\frac{\| \bar \u(\cdot) - \Pi_\disc \u\|_{\L^{2}(\O)}}{\| \bar \u(\cdot)\|_{\L^{2}(\O)}},\;
E_2:=\frac{\| \bar p(\cdot) - \chi_\disc p\|_{L^{2}(\O)}}{\| \bar p(\cdot)\|_{L^{2}(\O)}},\;
\mbox{ and }
E_3:=\frac{\| \bar S(\cdot) - \widetilde{\Pi}_\disc S\|_{L^{2}(\O)}}{\| \bar S(\cdot)\|_{L^{2}(\O)}}.
\end{equation}
%-----------------------------------------

First, we perform the test on the triangular mesh type to examine the accuracy of the scheme where the Navier-Stokes equations are not dependent on the temperature ($V(S)=1$). The exact solution is
\begin{equation}\label{exact-u}
\bar\u(x,y):=
\begin{pmatrix}
\bar u_1\\
\bar u_2
\end{pmatrix}
=
\begin{pmatrix}
\sin((\pi + t)y) \cos((\pi + t)x)\\
\cos((\pi + t)y) \sin((\pi + t)x)
\end{pmatrix},
\end{equation}
%---------------------
\begin{equation}\label{exact-p}
\bar p(x, y) = \sin((\pi + t)x) \cos((\pi + t)y),
\end{equation}
%------------------------
\begin{equation}\label{exact-S}
\bar S(x, y) = t \sin(x + y),
\end{equation}
%-----------------------------

We post in Figure \ref{fig-1} and Figure \ref{fig-2} the convergence rate graphs of the velocity, pressure, and temperature errors. The convergence rates are close to one, which is acceptable compared to well-known $\mathcal O(h)$-estimates. Furthermore, we make use of the least squares residual test. The resultant errors can be presented in the following formula:
\[
E=Ch^r,
\]
where $C$ and $r$ are positive constants, $h$ is the mesh size, and $E$ stands to errors computed by \eqref{eq-error}. Therefore, it yields
\[
\log(E_i)=\log(C)+r\log(h),\; i=1,2,3.
\]
%---------------------------------
We observe that the slopes are very close to one with the least squares residual of $0.21$, $0.20$, $0.11$, and $0.08$ with respect to errors on $\bar \u=(\bar u_1,\bar u_2)$, $\bar p$ and $\bar S$, respectively. 
%---------------------------------
\begin{figure}[ht]
	\begin{center}
	\includegraphics[scale=0.4]{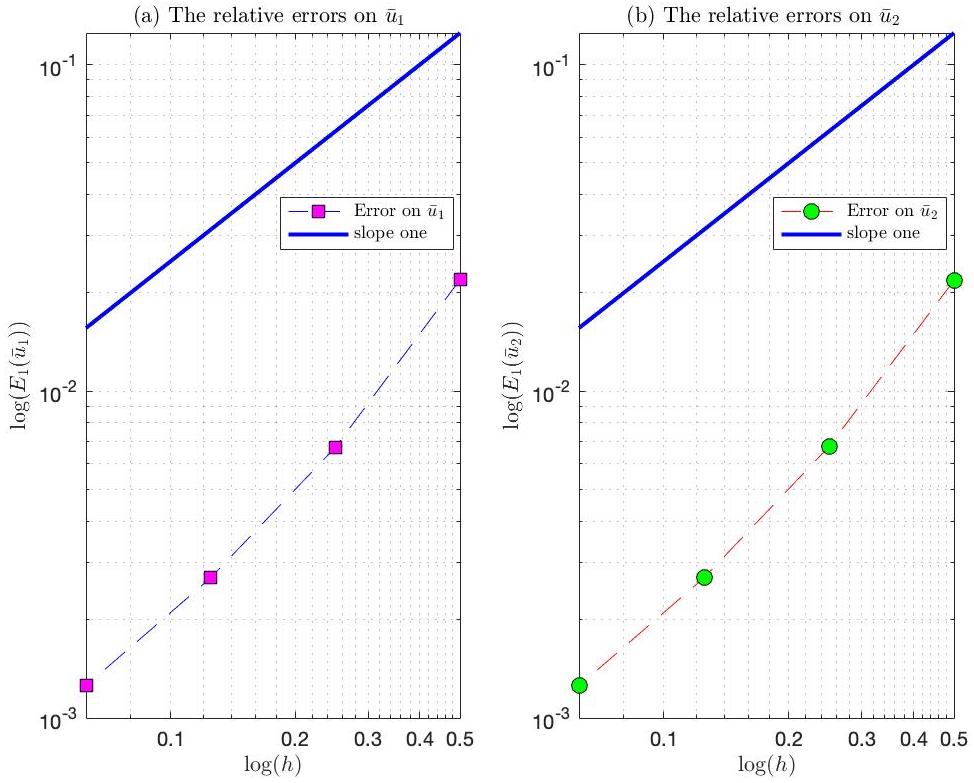}
	\end{center}
\caption{The relative errors for the case $V(S)=1$.}	
\label{fig-1}
\end{figure}
%---------------------------------------

\begin{figure}[ht]
	\begin{center}
	\includegraphics[scale=0.4]{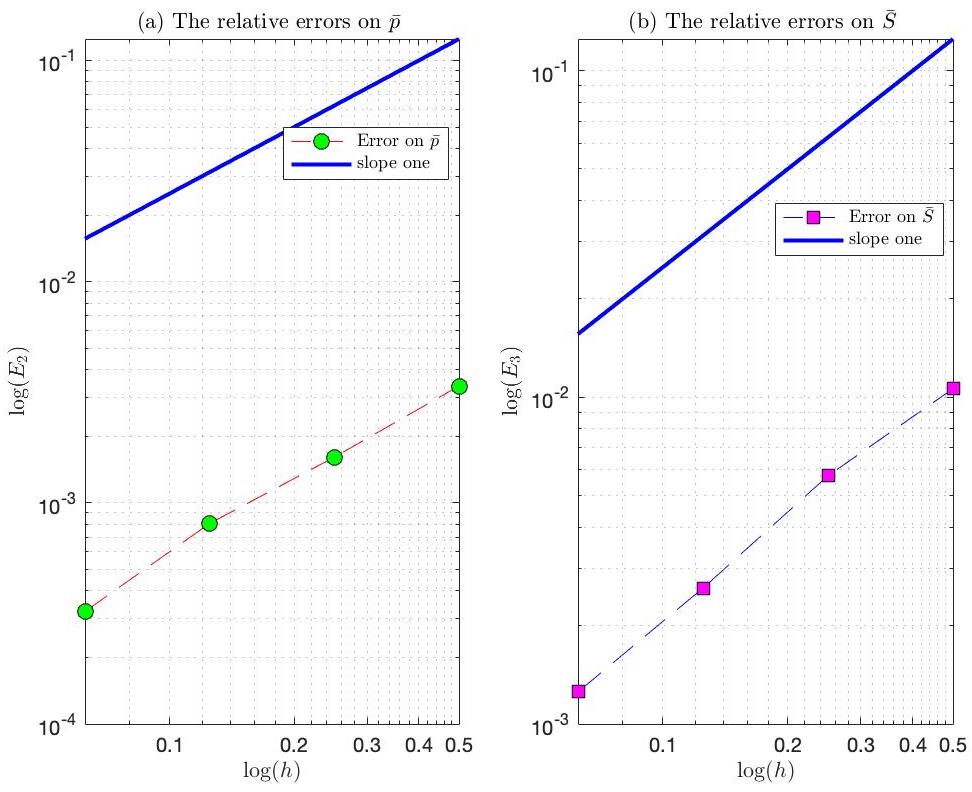}
	\end{center}
\caption{The relative errors for the case $V(S)=1$.}
\label{fig-2}
\end{figure}
%-----------------------------

Second, one vital feature of the hybrid finite volume method (used here) is the flexibility in dealing with complex geometries distorting cells. Using the distorted mesh, we examine the effectiveness of the scheme when the viscosity $V$ depends on the temperature $S$ in the problem \eqref{nsp}, i.e., $V(S)=\sqrt{S^2+1}+2$. The analytical solution is still given by \eqref{exact-u}--\eqref{exact-p}. We display the convergence rates in Figure \ref{fig-3} and Figure \ref{fig-4}, and notice that the scheme is still rigorous concerning the non-linearity and the mesh distortion, in which the slopes are one with the least squares residual of $0.05$, $0.04$. $0.11$, and $0.09$ respect with $L_2$-error on $\bar \u=(\bar u_1, \bar u_2)$, $\bar p$ and $\bar S$, respectively.    
%-------------------------------------------

\begin{figure}[ht]
	\begin{center}
	\includegraphics[scale=0.4]{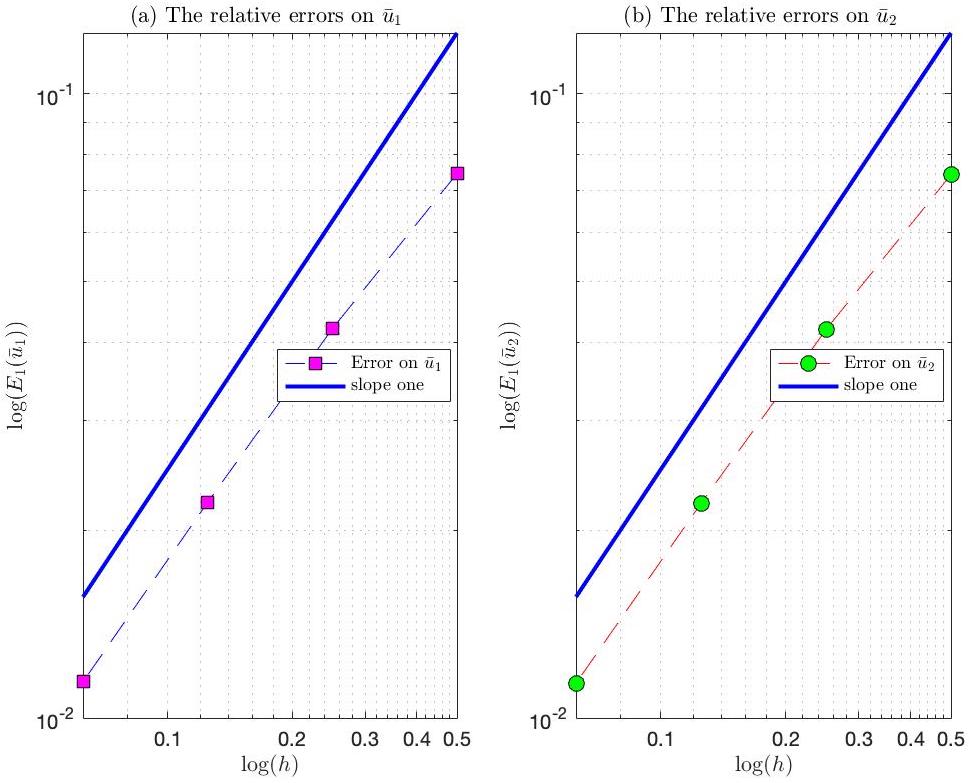}
	\end{center}
	\caption{The relative errors for the case $V(S)=\sqrt{S^2+1}+2$.}
\label{fig-3}
\end{figure}
%---------------------------------------

\begin{figure}[ht]
	\begin{center}
	\includegraphics[scale=0.4]{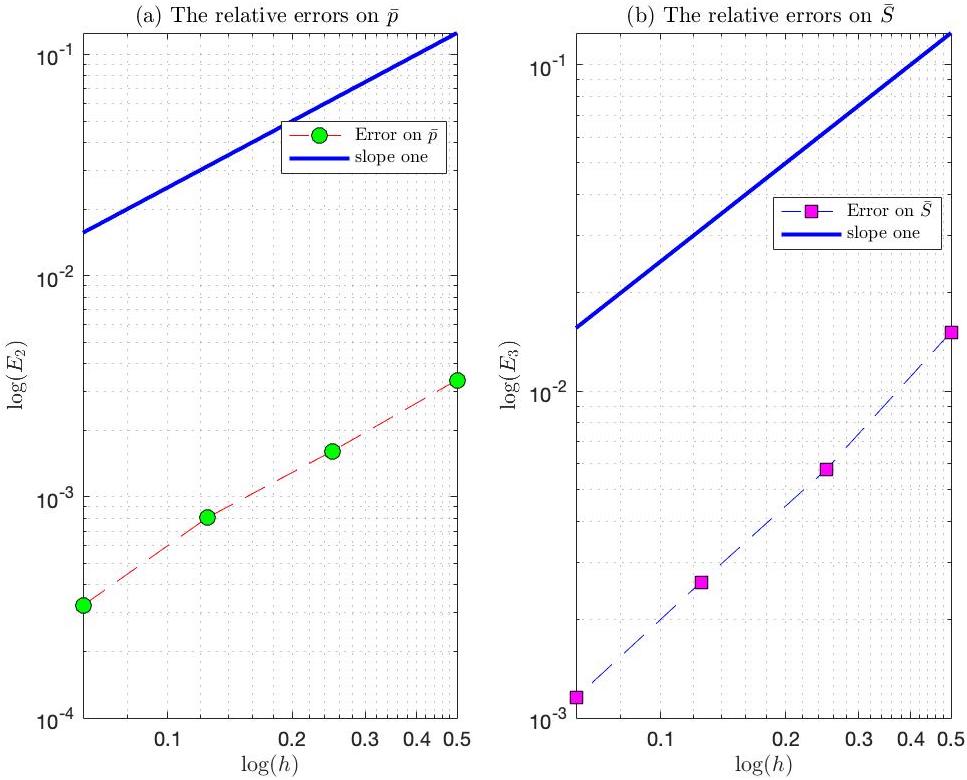}
	\end{center}
\caption{The relative errors for the case $V(S)=\sqrt{S^2+1}+2$.}
\label{fig-4}
\end{figure}

%-------------------

% --- BIBLIOGRAPHY ---

\bibliographystyle{siam}
\bibliography{PNSP-ref}

\end{document}

%---------------------  THE END --------------------------------

\vskip 2pc
{\bf Declarations}
 
\subsection*{Ethical Approval} 
Not applicable.
 
\subsection*{Funding} 
Not applicable.
 
\subsection*{Availability of data and materials}
Not applicable.